\def\version{}
\newcommand{\notyet}[1]{}
\DeclareSymbolFont{AMSb}{U}{msb}{m}{n}
\DeclareSymbolFontAlphabet{\mathbb}{AMSb}
\newcommand{\at}[1]{\vert\sb{\sb{#1}}}
\newcommand{\R}{{\mathbb R}}
\newcommand{\Norm}[1]{\left\Vert #1 \right\Vert}
\newcommand{\norm}[1]{\Vert #1 \Vert}
\newcommand{\const}{{\rm const}}
\providecommand{\ltor}[1]{
\ifnum #1=1{\it i}\else\ifnum #1=2{\it ii}\else\ifnum #1=3{\it iii}
\else\ifnum #1=4 {\it iv}\fi\fi\fi\fi
}
\DeclareMathSymbol{\varPhi}{\mathord}{letters}{"08}
\DeclareMathSymbol{\varOmega}{\mathord}{letters}{"0A}
\def\o{\mathaccent"7017}
\newcommand\Ho{\o{H}}
\font\thf cmssdc10 at 11pt
\theoremstyle{plain}
\newtheorem{theorem}{\thf Theorem}[section]
\newtheorem{lemma}[theorem]{\thf Lemma}
\newtheorem{proposition}[theorem]{\thf Proposition}
\theoremstyle{definition}
\newtheorem{definition}[theorem]{Definition}
\theoremstyle{remark}
\makeatletter\@addtoreset{equation}{section}
\renewcommand{\theequation}{\thesection.\arabic{equation}}
\begin{document}

\title{
On global attraction to stationary states for
\\
wave equations with concentrated nonlinearities
}

\author{
{\sc Elena Kopylova}
\footnote{Research supported by the Austrian Science Fund (FWF) under Grant No.\ P27492-N25
and RFBR grant No.\ 16-01-00100}
\\
{\it\small Faculty of Mathematics of Vienna University}\\
{\it\small and  Institute for Information Transmission Problems Russian Academy of Sciences}}

\date{\version}

\maketitle

\begin{abstract}
The global {\it attraction} to stationary states is established for solutions to  3D wave equations with concentrated nonlinearities: 
each finite energy solution converges as $t\to\pm\infty$ to stationary states.
The attraction is caused by nonlinear energy radiation.
\end{abstract}

\section{Introduction}
\label{int-results}
The paper concerns a nonlinear interaction of the {\it real wave field} with a
point oscillator. The system is governed by the following equations
\begin{equation}\label{iKG}
\left\{\begin{array}{c}
\ddot \psi(x,t)=\Delta\psi(x,t)+\zeta(t)\delta(x)\\\\
\lim\limits_{x\to 0}(\psi(x,t)-\zeta(t)G(x))=F(\zeta(t))
\end{array}\right|\quad x\in\R^3,\quad t\in\R,
\end{equation} 
where $G$ is the Green's function of operator $-\Delta$ in $\R^3$, i.e.
\[
  G(x)=\frac{1}{4\pi|x|},
\]
All derivatives here and below  are understood in the sense of distributions.
The nonlinearity  admits a potential
\begin{equation}\label{FU}
 F(\zeta)=U'(\zeta), \quad\zeta\in\R,\quad U\in C^2(\R).
\end{equation} 
We assume that 
\begin{equation}\label{bound-below}
U(\zeta)\to\infty, \quad \zeta\to\pm\infty.
\end{equation}
Furthermore, we assume that the set $Q=\{q\in R: F(q)=0\}$ is nonempty.
Then the system (\ref{iKG}) admits stationary solutions 
$qG(x)$, where $q\in Q$.
We suppose that the set $Q$ satisfies the following condition
\begin{equation}\label{ab}
[a,b]\not\subset Q~~{\rm for}~ {\rm any}~~ a<b.
\end{equation}
Let  $\Ho^1(\R^3)$ be the completion of the  space $C_0^\infty(\R^3)$ in the norm $\Vert\nabla\psi(x)\Vert_{L^2(\R^3)}$.
Equivalently, using Sobolev's embedding theorem, 
$\Ho^1(\R^3)=\{f \in L^6(R^3) : |\nabla f|\in L^2(\R^3)\}$, and
\begin{equation}\label{sob}
\Vert f\Vert_{L^6(\R^3)}\le C\Vert\nabla  f\Vert_{L^2(\R^3)}.
\end{equation}
Denote
\[
\Ho^2(\R^3):=\{ f\in \Ho^1(\R^3),~~\Delta f\in L^2(\R^3)\},\quad t\in\R.
\]
We consider Cauchy problem for system (\ref{iKG}) with initial data 
$\Psi(x,0)=(\psi(x,0),\dot\psi(x,0))$ which  can be represented as the sum of
{\it regular}  component from $\Ho^2(\R^3)\oplus \Ho^1(\R^3)$ 
and  {\it singular} component proportional to $G(x)$ (see Definition \ref{cDdef}). 
Our main goal is the global attraction of the solution $\Psi(x,t)=(\psi(x,t),\dot\psi(x,t))$
to  stationary states:
\[
\Psi(x,t)\to (q_{\pm}G(x),\,0),\quad t\to\pm\infty,\quad q_{\pm}\in Q,
\]
where the asymptotics hold in local $L^2\oplus L^2$-seminorms.

Similar global attraction was established  for the first time 
i) in \cite{Kom95, Kom99, KK2007}
for 1D wave and Klein-Gordon equations coupled to nonlinear oscillators, ii) 
 in \cite{KK2009, KK2010b}
for nD Klein-Gordon and Dirac equations 
with mean field  interaction, and iii) in 
\cite{C2013} for discrete in space and  time nD Klein-Gordon equation  equations  
interacting with a nonlinear oscillator.

In the context of the Schr\"odinger and wave equations
the point interaction of type (\ref{iKG})
was introduced in \cite{AAFT, AAFT1,  AGHH, KP, NP},
where the well-posedness of the Cauchy problem and the blow up solutions  were studied. 
The orbital and asymptotic stability of soliton solutions for the Schr\"odinger equation
with the point interaction has been established in \cite{ANO}. 
The global attraction  for 3D equations 
with the point interaction was  not  studied up to now.
In the present paper
we prove for the first time the global attraction in the case of 3D  wave equation.

Let us comment on our approach. 
First, similarly to \cite{KK2007, KK2009, KK2010b},  we represent the solution as the sum
of {\it dispersive} and {\it singular} components. The dispersive component is a solution
of the free wave equation with the same initial data $\Psi(x,0)$. The singular component  is a solution
of a coupled  system of wave equation with zero initial data and a point source, and of a nonlinear ODE.

We prove the long-time decay of the dispersive component in local $H^2\oplus H^1$-seminorms. 
To establish the decay for regular part of the dispersive  component, corresponding
to regular initial data from $H^2\oplus H^1$, we apply the strong Huygens principle and the energy
conservation  for  the free wave equation.
For  the  remaining  singular part  we apply the strong Huygens principle.
The dispersive decay is caused by the energy radiation to infinity.

Finally, we  study the nonlinear ODE with a source. We prove that the source decays and 
then the attractor of the ODE coincides with the set of zeros of the nonlinear function $F$,
 i.e. with the set $Q$. This allows us to prove the convergence  of the singular component of the solution
 to one of the stationary solution in local $L^2\oplus L^2$-seminorms.

\section{Main results}
\label{sect-results}
\subsection*{Model}
We fix a nonlinear function $F:\R\to\R$ and define the domain
\begin{equation}\label{q}
D_F=\{\psi\in L^2(\R^3):\psi(x)=\psi_{reg}(x)+\zeta G(x),~~\psi_{reg}\in \Ho^2(\R^3),~~
 \zeta\in\R,~~\lim\limits_{x\to 0}\psi_{reg}(x)=F(\zeta)\}
\end{equation}
which generally is not a linear space. The limit in (\ref{q}) is well defined since
$\Ho^2(\R^3)\subset H^2_{loc}(\R^3)\subset C(\R^3)$ by the Sobolev embedding theorem.

Let $H_F$ be a nonlinear operator on the domain $D_F$ defined by 
\begin{equation}\label{HF}
 H_F \psi=\Delta \psi_{reg},\quad\psi\in D_F.
\end{equation}
The system (\ref{iKG}) for $\psi(t)\in D_F$ reads
\begin{equation}\label{KG}
\ddot \psi(x,t)=H_F \psi(x,t),\quad x\in\R^3,\quad t\in\R.
\end{equation}
Let us introduce the phase space  for equation (\ref{KG}).
Denote the space
\[
\dot D=\{\pi\in L^2(\R^3):\pi(x)=\pi_{reg}(x)+\eta G(x),
~~\pi_{reg}\in \Ho^1(\R^3), ~~\eta\in\R\}.
\]
Obviously, $D_F\subset\dot D$.
\begin{definition}\label{cDdef}
${\cal D}_F$ is the Hilbert space of the states 
$\Psi=(\psi(x),\pi(x))\in D_F\oplus\dot D$ equipped with the finite norm
\[
\Vert\Psi\Vert_{\cal D}^2:=\norm{\nabla\psi_{reg}}_{L^2(\R^3)}^2+\norm{\Delta\psi_{reg}}_{L^2(\R^3)}^2
+\norm{\nabla\pi_{reg}}_{L^2(\R^3)}^2+|\zeta|^2+|\eta|^2.
\]
\end{definition}
\subsection*{Well-posedness}
\begin{theorem}\label{theorem-well-posedness}
Let conditions (\ref{FU}) and (\ref{bound-below}) hold. 
Then 
\begin{enumerate}
\item
For every initial data $\Psi(0)=(\psi(0),\dot\psi(0))\in {\cal D}_F$  the equation
(\ref{KG}) has a unique strong solution $\psi(t)$ such that 
\item
The energy is conserved:
\[
{\cal H}_F(\Psi(t)):=
\frac 12 \Big(\Vert\dot\psi(t)\Vert^2_{L^2(\R^3)}
+\Vert\nabla\psi_{reg}(t)\Vert^2_{L^2(\R^3)}\Big)+U(\zeta(t))=\const, \quad t\in\R.
\]
\item
The following a priori bound holds
\begin{equation}\label{zeta-bound}
|\zeta(t)|\le C(\Psi(0)), \quad t\in R. 
\end{equation}
\end{enumerate}
\end{theorem}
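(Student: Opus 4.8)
The plan is to reduce the nonlinear system (\ref{KG}) to a scalar ODE for the charge $\zeta(t)$, exploiting the dispersive/singular splitting announced in the introduction. I would write $\psi=\psi_f+\psi_S$, where $\psi_f$ solves the free wave equation $\ddot\psi_f=\Delta\psi_f$ with the given data $\Psi(0)$, and $\psi_S$ solves $\ddot\psi_S=\Delta\psi_S+\zeta(t)\delta(x)$ with zero data. The retarded potential gives $\psi_S(x,t)=\zeta(t-|x|)/(4\pi|x|)$ for $|x|<t$, whose expansion near the origin is $\psi_S(x,t)=\zeta(t)G(x)-\dot\zeta(t)/(4\pi)+O(|x|)$. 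Thus the singular part of $\psi$ is exactly $\zeta(t)G$, as the domain $D_F$ demands, while the regular part at the origin equals $\psi_f(0,t)-\dot\zeta(t)/(4\pi)$. Imposing the boundary condition $\lim_{x\to0}\psi_{reg}=F(\zeta)$ from (\ref{q}) converts (\ref{KG}) into
\[
\dot\zeta(t)=4\pi\bigl(\psi_f(0,t)-F(\zeta(t))\bigr),\qquad \zeta(0)=\zeta_0 .
\]

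For the forcing $\psi_f(0,\cdot)$ I would invoke the strong Huygens principle via Kirchhoff's formula: for $t>0$ the value $\psi_f(0,t)$ depends only on the data on the sphere $|x|=t$, where both the regular and the singular ($\propto G$) components are smooth. On that sphere $G\equiv 1/(4\pi t)$, so the singular data contribute explicit bounded terms---in fact the spherical means of the $G$-parts are constant in $t$, which makes $\psi_f(0,\cdot)$ extend continuously to $t=0$ and yields the consistency $\dot\zeta(0)=\eta_0$. Hence $\psi_f(0,\cdot)$ is a continuous forcing. Since $U\in C^2$ by (\ref{FU}) makes $F=U'$ locally Lipschitz, Picard--Lindel\"of gives a unique local solution $\zeta\in C^1$, from which the field is reconstructed as a candidate strong solution.

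Energy conservation should follow by direct differentiation. Writing $\dot\psi=\dot\psi_{reg}+\dot\zeta G$ and using $\ddot\psi=\Delta\psi_{reg}$, Green's identity splits $\langle\dot\psi,\Delta\psi_{reg}\rangle$ into a regular--regular part, which cancels $\langle\nabla\psi_{reg},\nabla\dot\psi_{reg}\rangle$, and a singular part $\dot\zeta\int G\,\Delta\psi_{reg}\,dx=-\dot\zeta\,\psi_{reg}(0)=-\dot\zeta\,F(\zeta)$, using $\Delta G=-\delta$ and the boundary condition. As $U'=F$, this is annihilated by $U'(\zeta)\dot\zeta$, so $\frac{d}{dt}{\cal H}_F=0$. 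The a priori bound is then immediate: nonnegativity of the kinetic and gradient terms gives $U(\zeta(t))\le{\cal H}_F(\Psi(0))$, and the coercivity (\ref{bound-below}) forces this sublevel set to be bounded, whence $|\zeta(t)|\le C(\Psi(0))$. This bound excludes finite-time blow-up of the ODE and promotes the local solution to a global one.

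I expect the main obstacle to be the rigorous verification that the reconstructed field is a genuine strong solution lying in ${\cal D}_F$ for all $t$: one must show $\psi_{reg}(t)\in\Ho^2$ with $\Delta\psi_{reg}(t)\in L^2$, which couples the spatial regularity of the retarded potential to the temporal regularity of $\zeta$ and requires a bootstrap in the ODE, and one must justify the Green's-identity manipulations (in particular the vanishing of the boundary integral on $\p B_\epsilon$ as $\epsilon\to0$) for strong rather than classical solutions. Once this regularity is secured, the energy identity and the a priori bound are comparatively routine.
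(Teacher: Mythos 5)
Your reduction is essentially the paper's own: the same splitting $\psi=\psi_f+\psi_S$, the same retarded-potential formula (\ref{psiSf}), and the same scalar ODE --- your equation $\dot\zeta=4\pi(\psi_f(0,t)-F(\zeta))$ is exactly (\ref{delay}), and your Kirchhoff-formula argument for continuity of the forcing is a repackaging of Lemma~\ref{wdl}\,(i)--(ii), where the paper instead computes the $G$-part of $\psi_f$ explicitly and finds it equal to the constant $\dot\zeta_0/(4\pi)$ inside the light cone (so your phrase ``spherical means of the $G$-parts are constant in $t$'' is slightly off --- the mean of $G$ over $|y|=t$ is $1/(4\pi t)$ --- but the resulting contributions to Kirchhoff's formula are indeed constant, and your conclusion stands). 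Where you genuinely diverge is the globalization mechanism: you run Picard--Lindel\"of with the merely locally Lipschitz $F$ and then continue the solution past any finite time using the a priori bound, whereas the paper (following \cite{NP}) first truncates the potential at the energy level, replacing $F$ by a globally Lipschitz $\tilde F$ as in (\ref{Lambda1})--(\ref{Lambda22}), proves local well-posedness by contraction with a time span $\tau$ depending only on $\Lambda(\Psi_0)$, shows via energy conservation that $|\zeta(t)|\le\Lambda(\Psi_0)$ so the truncation is never active (Lemma~\ref{cor1}), and then iterates with a uniform step. Both routes are sound; the truncation buys a uniform local existence time and dispenses with a maximal-interval argument, while yours is more elementary. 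Your direct differentiation of the energy is also a legitimate substitute for the paper's citation of \cite[Lemma 3.7]{NP}, with the sign bookkeeping correct: the singular pairing $\dot\zeta\langle G,\Delta\psi_{reg}\rangle=-\dot\zeta\,F(\zeta)$ cancels $U'(\zeta)\dot\zeta$.

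The one step you flag but do not close --- that the reconstructed field actually lies in ${\cal D}_F$, i.e.\ $\psi_{reg}(t)\in\Ho^2(\R^3)$ --- is precisely where the paper invests its remaining effort, and continuity of $\lambda$ alone will not get you there. The paper's route is: Lemma~\ref{wdl}\,(iii) gives $\dot\lambda\in L^2_{loc}([0,\infty))$; the ODE (\ref{delay1}) then yields $\ddot\zeta\in L^2([0,\tau])$; and writing $\psi_{reg,1}=\psi-\zeta G_1$ with $G_1(x)=G(x)e^{-|x|}$ converts the question into $H^2\oplus H^1$ persistence for a wave equation with the source $(\zeta-\ddot\zeta)G_1$, settled by \cite[Lemma 3.2]{NP}. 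This same $L^2$-in-time regularity of $\ddot\zeta$ is what legitimizes your Green's-identity manipulations (including the vanishing boundary term on $\p B_\epsilon$, which uses $\Ho^2\subset C(\R^3)$ and the domain condition $\lim_{x\to0}\psi_{reg}=F(\zeta)$). So your proposal is correct in architecture, but to be complete it must include that bootstrap --- $\dot\lambda\in L^2_{loc}$, hence $\ddot\zeta\in L^2$ --- rather than leaving the regularity as an acknowledged obstacle.
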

This result is proved in \cite[Theorem 3.1]{NP}. For the convenience of readers, we sketch main steps of the proof  in  Appendix  
 in the case $t\ge 0$ clarifying some details  of \cite{NP}. As the result 
 the solution $\psi(x,t)$ to (\ref{KG}) with initial data $\psi(0)=\psi_0\in D_F$, 
 $\dot\psi(0)=\pi_0\in\dot D$ can be represented as the sum
\begin{equation}\label{sol_sum}
\psi(x,t):= \psi_f(x,t)+\psi_S(x,t), \quad t\ge 0,
\end{equation}
where the {\it dispersive component}
$\psi_f(x,t)$ is a unique solution of the Cauchy problem for the free wave equation
\begin{equation}\label{CP1}
\ddot{\psi}_f(x,t) = \Delta\psi_f(x,t),
\quad \psi_f(x,0) = \psi_0(x),\quad\dot\psi_f(x,0)  =  \pi_0(x),
\end{equation}
and the {\it singular component} $\psi_S(x,t)$ is a unique solution of the Cauchy problem 
for the  wave equation with a point source
\begin{equation}\label{CP2}
\ddot\psi_S(x,t)= \Delta\psi_S(x,t) +\zeta(t)\delta(x),
\quad \psi_S(x,0) = 0,\quad\dot\psi_S(x,0)=0.
\end{equation}
Here $\zeta(t)\in C^1_b([0,\infty))$  is a unique solution to the Cauchy problem 
for the following first-order nonlinear ODE
\begin{equation}\label{delay}
\frac {1}{4\pi}\dot\zeta(t)+ F(\zeta(t))=\lambda(t),
\quad \zeta(0)=\zeta_0,
\end{equation}
where 
\begin{equation}\label{lam}
\lambda(t):=\lim\limits_{x\to 0}\psi_f(x,t),\quad t>0,
\end{equation}
Next lemma implies that  limit (\ref{lam}) is well defined,  and there exists $\lambda(0+)=\lim\limits_{t\to 0+}\lambda(t)$. 
 \begin{lemma}\label{wdl}
 Let $(\psi_0,\pi_0)\in {\cal D}_F$. Then\\
i) There exists a unique solution $\psi_f\in C([0;\infty), L^2_{loc})$ to (\ref{CP1}).\\
ii) The limit in (\ref{lam}) exists and is continuous in $t\in [0,\infty)$.\\
iii)  $\dot\lambda\in L^2_{loc}([0,\infty))$.
 \end{lemma}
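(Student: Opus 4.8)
The plan is to split $\psi_f=\psi_{reg}+\psi_{sing}$ according to the decomposition of the data $\psi_0=\psi_{0,reg}+\zeta_0 G$, $\pi_0=\pi_{0,reg}+\eta_0 G$: here $\psi_{reg}$ solves (\ref{CP1}) with the regular data $(\psi_{0,reg},\pi_{0,reg})\in\Ho^2(\R^3)\oplus\Ho^1(\R^3)$, while $\psi_{sing}$ solves it with the singular data $(\zeta_0 G,\eta_0 G)$, and to treat the two by completely different tools. For $\psi_{reg}$ I would invoke the energy theory of the free wave equation: applying energy conservation to $\psi_{reg}$ and to its first derivatives (and using $\Norm{\Delta f}_{L^2}=\Norm{D^2 f}_{L^2}$ on $\Ho^2(\R^3)$) shows $\psi_{reg}(t)\in\Ho^2(\R^3)$ for each $t$, so $\psi_{reg}\in C([0,\infty),L^2_{loc})$. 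For $\psi_{sing}$ I would compute the wave evolution of $G$ explicitly from Newton's shell theorem: the spherical mean of $G(y)=1/(4\pi|y|)$ over $\{|y-x|=t\}$ equals $1/(4\pi|x|)$ when $|x|>t$ and $1/(4\pi t)$ when $|x|<t$, whence Kirchhoff's formula gives, for $x\neq0$,
\[
\psi_{sing}(x,t)=\frac{\zeta_0}{4\pi|x|}\,\mathbf{1}(|x|>t)
+\eta_0\left(\frac{t}{4\pi|x|}\,\mathbf{1}(|x|>t)+\frac{1}{4\pi}\,\mathbf{1}(|x|<t)\right).
\]
This lies in $C([0,\infty),L^2_{loc})$, so the sum proves existence in (i); uniqueness follows from finite propagation speed applied to the difference of two solutions with vanishing data.

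The decisive feature for (ii) is that this singular field is locally constant near the origin: for fixed $t>0$ and $|x|<t$ the indicator $\mathbf{1}(|x|>t)$ vanishes and $\psi_{sing}(x,t)\equiv\eta_0/(4\pi)$, hence $\lim_{x\to0}\psi_{sing}(x,t)=\eta_0/(4\pi)$ for every $t>0$. Since $\psi_{reg}(t)\in\Ho^2(\R^3)\subset C(\R^3)$, the limit $\lim_{x\to0}\psi_{reg}(x,t)$ exists and equals the value $\psi_{reg}(0,t)$ of the continuous representative, which I would evaluate through spherical means.

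Writing $\overline g(r)$ for the spherical mean of a function $g$ and setting $U_0(r):=r\,\overline{\psi_{0,reg}}(r)$, $U_1(r):=r\,\overline{\pi_{0,reg}}(r)$, the multiplied spherical mean $r\mapsto r\,\overline{\psi_{reg}}(r,t)$ solves the one-dimensional wave equation on $r>0$ with Dirichlet condition at $r=0$; the odd-reflection d'Alembert formula, differentiated at $r=0$, yields
\[
\lambda(t)=\psi_{reg}(0,t)+\frac{\eta_0}{4\pi}=U_0'(t)+U_1(t)+\frac{\eta_0}{4\pi},\qquad t>0.
\]
Continuity of $\lambda$ on $(0,\infty)$ and the existence of $\lambda(0+)=F(\zeta_0)+\eta_0/(4\pi)$ (using $\lim_{x\to0}\psi_{0,reg}(x)=F(\zeta_0)$ from (\ref{q}), consistently with $\dot\zeta(0)=\eta_0$ in (\ref{delay})) then follow from the absolute continuity of $U_0'$ and $U_1$, which is a byproduct of the $L^2$ bounds below.

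It remains to prove (iii), namely $\dot\lambda=U_0''+U_1'\in L^2_{loc}$. For the first term I would use the spherical-mean identity $\overline{\Delta g}=\overline g''+\frac{2}{r}\overline g'$, equivalently $\frac{d^2}{dr^2}(r\overline g)=r\,\overline{\Delta g}$; thus $U_0''(r)=r\,\overline{\Delta\psi_{0,reg}}(r)$, and Cauchy--Schwarz on the sphere gives $\int_0^\infty|U_0''|^2\,dr\le\frac{1}{4\pi}\Norm{\Delta\psi_{0,reg}}_{L^2}^2<\infty$. The main obstacle is $U_1'=\overline{\pi_{0,reg}}+r\,\overline{\pi_{0,reg}}'$: the radial-derivative term is controlled by $\int_0^\infty r^2|\overline{\pi_{0,reg}}'|^2\,dr\le\frac{1}{4\pi}\Norm{\nabla\pi_{0,reg}}_{L^2}^2$, but the bare spherical mean $\overline{\pi_{0,reg}}(r)$ may blow up like $r^{-1/2}$ as $r\to0$ (a function in $\Ho^1(\R^3)$ has no pointwise trace at a point), which is not square integrable, so no pointwise estimate suffices. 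The remedy is Hardy's inequality $\int_0^\infty\big(\int_r^\infty f\big)^2\,dr\le 4\int_0^\infty r^2 f^2\,dr$ applied to $\overline{\pi_{0,reg}}(r)=-\int_r^\infty\overline{\pi_{0,reg}}'(s)\,ds$, which gives $\int_0^\infty|\overline{\pi_{0,reg}}|^2\,dr\le\frac{1}{\pi}\Norm{\nabla\pi_{0,reg}}_{L^2}^2<\infty$; the borderline $r^{-1/2}$ behaviour is exactly reconciled by the scale invariance of this bound. Hence $U_1'\in L^2(0,\infty)$ and $\dot\lambda\in L^2(0,\infty)\subset L^2_{loc}([0,\infty))$, which completes the argument.
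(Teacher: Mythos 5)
Your proposal is correct, and for parts (i)--(ii) it essentially coincides with the paper's argument: you use the same splitting of the data into a regular part in $\Ho^2(\R^3)\oplus\Ho^1(\R^3)$ and a singular part proportional to $G$, and your explicit formula for the singular evolution (obtained via Kirchhoff's formula and Newton's shell theorem) agrees exactly with the paper's, which instead subtracts $\xi(t)G$ with $\xi(t)=\zeta_0+t\dot\zeta_0$ and quotes Smirnov's retarded-potential formula for the resulting point-source equation; both routes lead to the same key observation that the singular field equals the constant $\dot\zeta_0/(4\pi)$ on $\{|x|<t\}$, while the regular part is handled by energy conservation and $\Ho^2(\R^3)\subset C(\R^3)$. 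The genuine divergence is in part (iii): the paper simply cites \cite[Lemma 3.4]{NP} for $\dot\psi_{f,reg}(0,\cdot)\in L^2_{loc}$, whereas you prove it from scratch via the trace formula $\lambda(t)=U_0'(t)+U_1(t)+\eta_0/(4\pi)$, the Darboux identity $\frac{d^2}{dr^2}\bigl(r\overline{g}\bigr)=r\,\overline{\Delta g}$, and the dual Hardy inequality to tame the borderline $r^{-1/2}$ behaviour of $\overline{\pi_{0,reg}}$; this makes the lemma self-contained and even quantitative, giving $\Vert\dot\lambda\Vert_{L^2(0,\infty)}\le C\bigl(\Vert\Delta\psi_{0,reg}\Vert_{L^2}+\Vert\nabla\pi_{0,reg}\Vert_{L^2}\bigr)$, which is more than the paper records. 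Two small points you should patch, though neither is a real gap: the representation $\overline{\pi_{0,reg}}(r)=-\int_r^\infty\overline{\pi_{0,reg}}'(s)\,ds$ needs the vanishing of spherical means at infinity, which follows from the embedding $\Ho^1(\R^3)\subset L^6(\R^3)$; and the pointwise Kirchhoff/d'Alembert identities for data merely in $\Ho^2(\R^3)\oplus\Ho^1(\R^3)$ should be justified by a density argument, which your $L^2$ bounds are exactly strong enough to support (note also that energy conservation applied to $\psi_{reg}$ itself is not literally available since $\pi_{0,reg}$ need not lie in $L^2(\R^3)$ --- only the differentiated energy $\Vert\nabla\dot\psi_{reg}\Vert^2_{L^2}+\Vert\Delta\psi_{reg}\Vert^2_{L^2}$ is finite, but that is all your argument actually uses).
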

 \begin{proof}
{\it i)} We split $\psi_f(x,t)$ as 
 \[
 \psi_f(x,t)=\psi_{f,reg}(x,t)+g(x,t),
 \]
 where $\psi_{f,reg}$ and $g$ are the solutions to the free wave equation with  initial data 
 $(\psi_{0,reg},\pi_{0,reg})\in \Ho^2(\R^3)\oplus\Ho^1(\R^3)$ and $(\zeta_0 G,\dot\zeta_0 G)$, respectively.
 By the energy conservation $\psi_{f,reg}\in C([0,\infty),\Ho^2(\R^3))$.  
 Now we obtain  an explicit formula for $g(x,t)$.  Note that 
 $h(x,t)=g(x,t)-\xi(t)G(x)$, where $\xi(t)=\zeta_0+t\dot\zeta_0$, satisfies
 \begin{equation}\label{Smir}
 \ddot h(x,t)=\Delta h(x,t)-\xi(t)\delta(x)
 \end{equation} 
 with zero initial data.
 The unique solution to (\ref{Smir}) is the  spherical wave 
\begin{equation}\label{Smir1}
h(x,t)=-\frac{\theta(t-|x|)}{4\pi|x|}\xi(t-|x|),\quad t\ge 0,
\end{equation}
where $\theta$  is the Heaviside function.
This is  well-known  formula \cite[Section 175]{S} for the retarded potential of the  point particle.
Hence,
\[
g(x,t)=h(x,t)+\xi(t)G(x)=-\frac{\theta(t-|x|)(\zeta_0+(t-|x|)\dot\zeta_0)}{4\pi|x|}
+\frac{\zeta_0+t\dot\zeta_0}{4\pi|x|}\in C([0,\infty),L^2_{loc}(\R^3)).
\]
ii)
We have
\begin{equation}\label{g-lim}
\lim\limits_{x\to 0}g(x,t)=\dot\zeta_0/(4\pi),\quad t>0.
\end{equation}
Moreover,  for any $t\ge 0$
 the  $\lim\limits_{x\to 0}\psi_{f,reg}(x,t)$ exists  because $\Ho^2(\R^3)\subset C(\R^3)$.\\
{\it iii)} Due to (\ref{g-lim}) it remains  to show that
$\dot\psi_{f,reg}(0,t)\in L^2_{loc}([0,\infty))$. This follows immediately from 
\cite[Lemma 3.4]{NP}.
 \end{proof}
\subsection*{Stationary solutions and the main theorem}
The stationary solutions of equation  (\ref{KG}) are solutions of the form
\begin{equation}\label{sol1}
\psi_q(x)=qG(x)\in L^2_{loc}(\R^3),\quad q\in\R.
\end{equation}
\begin{lemma}\label{sol-ex} (Existence of stationary solutions).
Function (\ref{sol1}) is a stationary soliton to (\ref{KG}) if and only if
\begin{equation}\label{qsol}
F(q)=0.
\end{equation}
\end{lemma}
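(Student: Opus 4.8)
The plan is to substitute the candidate $\psi_q(x)=qG(x)$, together with the constant amplitude $\zeta(t)\equiv q$, directly into the defining system (\ref{iKG}) and to read off the compatibility condition. Since $\psi_q$ does not depend on $t$, we have $\ddot\psi_q\equiv 0$, so the only things to verify are the field equation and the point boundary condition; I expect the whole content of the lemma to sit in the latter.

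For the field equation I would use that $G$ is the Green's function of $-\Delta$ on $\R^3$, i.e.\ $\Delta G=-\delta$ in the sense of distributions. Then with $\zeta(t)\equiv q$ the right-hand side of the first line of (\ref{iKG}) becomes $\Delta(qG)+q\delta=-q\delta+q\delta=0$, which matches $\ddot\psi_q\equiv 0$. Thus the bulk equation is satisfied for every $q\in\R$, and it carries no information about $F$.

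It remains to impose the boundary condition, which is precisely the constraint encoded in the domain $D_F$ of (\ref{q}). Writing $\psi_q=\psi_{reg}+\zeta G$, the admissible splitting is $\zeta=q$, $\psi_{reg}\equiv 0$; this choice is forced, because any two such splittings differ by a multiple of $G$ in the $\Ho^2$-component, and $\Ho^2(\R^3)\subset C(\R^3)$ is bounded near the origin whereas $G$ is not. The boundary condition $\lim\limits_{x\to 0}\psi_{reg}(x)=F(\zeta)$ then reads $0=F(q)$; equivalently, the second line of (\ref{iKG}) becomes $\lim\limits_{x\to 0}(qG-qG)=F(q)$, i.e.\ $F(q)=0$. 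This yields both implications of the ``if and only if''.

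There is no substantial obstacle here; the statement is essentially a direct computation. The only points requiring care are the distributional identity $\Delta G=-\delta$ and the fact that $\psi_q\in L^2_{loc}(\R^3)$ rather than $L^2(\R^3)$, so that membership in $D_F$ and the operator form (\ref{KG}) must be read in the local sense, with the genuine content of the lemma being that the nonlinearity $F$ enters only through the point boundary condition and never through the bulk dynamics.
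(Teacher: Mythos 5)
Your proof is correct and follows essentially the same route as the paper: substitute the splitting $\psi_{reg}\equiv 0$, $\zeta(t)\equiv q$ into (\ref{iKG}) and observe that the bulk equation holds automatically while the boundary condition reduces to $F(q)=0$. Your extra remarks --- verifying $\Delta(qG)+q\delta=0$ explicitly and noting that the splitting is forced since $\Ho^2(\R^3)\subset C(\R^3)$ while $G$ is unbounded at the origin --- merely make explicit what the paper leaves as ``evident.''
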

\begin{proof}
Evidently,  $\psi_q(x)$ admits the splitting $\psi_q(x)=\psi_{reg}(x,t)+\zeta(t)G(x)$, where
$\psi_{reg}(x,t)\equiv 0$ and  $\zeta(t)\equiv q$.
Hence, the second equation of (\ref{iKG})  is equivalent to (\ref{qsol}).
\end{proof}
Our main result is the following theorem.
\begin{theorem}[Main Theorem]
\label{main-theorem}
Let assumptions (\ref{FU}), (\ref{bound-below}) and (\ref{ab}) hold and let $\psi(x,t)$ be a solution
to equation (\ref{KG}) with  initial data $\Psi(0)=(\psi(0),\,\dot\psi(0))\in {\cal D}_F$. Then 
\[
(\psi(t),\,\dot\psi(t))\to (\psi_{q_{\pm}},\,0),\quad t\to\pm\infty,\quad q_{\pm}\in Q,
\]
where the  convergence hold in $L^2_{loc}(\R^3)\oplus L^2_{loc}(\R^3)$.
\end{theorem}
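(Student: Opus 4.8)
The plan is to exploit the splitting (\ref{sol_sum}), which reduces the theorem to two essentially independent tasks: (a) a \emph{dispersive decay} statement for the component $\psi_f$ solving the free Cauchy problem (\ref{CP1}), and (b) the analysis of the \emph{dissipative} first--order ODE (\ref{delay}) for $\zeta(t)$. Granting both, the conclusion is immediate. Indeed, the singular component is the retarded potential analogous to (\ref{Smir1}), namely $\psi_S(x,t)=\theta(t-|x|)\zeta(t-|x|)/(4\pi|x|)$, so once $\zeta(t)\to q_+$ and $\dot\zeta(t)\to0$ one gets $\psi_S(\cdot,t)\to q_+G=\psi_{q_+}$ and $\dot\psi_S(\cdot,t)\to0$ in $L^2_{loc}(\R^3)$; together with the decay of $\psi_f,\dot\psi_f$ this gives the asserted convergence as $t\to+\infty$, while $t\to-\infty$ follows by time reversal, the equation (\ref{KG}) being invariant under $t\mapsto-t$.

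For task (a) I would keep the decomposition $\psi_f=\psi_{f,reg}+g$ of Lemma \ref{wdl}. The regular part solves the free wave equation with finite--energy data in $\Ho^2\oplus\Ho^1$, so the strong Huygens principle together with conservation of the free energy forces its local energy seminorms, those of $\nabla\psi_{f,reg}$, $\Delta\psi_{f,reg}$ and $\dot\psi_{f,reg}$, to vanish as $t\to\infty$; the explicit part $g$ is an outgoing spherical wave, constant in the light cone $|x|<t$, whose derivatives likewise leave every bounded set. The genuinely delicate point is the scalar driving term $\lambda(t)=\lim_{x\to0}\psi_f(x,t)$ in (\ref{delay}): by (\ref{g-lim}) the contribution of $g$ is the nondecaying constant $\dot\zeta_0/(4\pi)$. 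The resolution is that the constraint $\dot\psi(0)=\pi_0\in L^2(\R^3)$ built into the phase space forces the far--field tail of $\pi_{0,reg}$ to be $-\dot\zeta_0G$, whose free evolution produces exactly $-\dot\zeta_0/(4\pi)$ at the origin; the two contributions cancel, so that $\lambda(t)\to0$ and in fact $\psi_f(\cdot,t)\to0$ in $L^2_{loc}(\R^3)$. I expect this cancellation, together with the quantitative strengthening $\lambda,\dot\lambda\in L^2(0,\infty)$ of Lemma \ref{wdl}(iii), to be the main obstacle of the whole argument.

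For task (b) I would run an energy--dissipation argument on (\ref{delay}). Multiplying it by $\dot\zeta$ yields $\frac{1}{4\pi}\dot\zeta^2+\frac{d}{dt}U(\zeta)=\lambda\dot\zeta$; integrating over $[0,T]$ and using that $U$ is bounded below by (\ref{bound-below}) and that $\zeta$ is bounded by (\ref{zeta-bound}), the Cauchy--Schwarz bound $\int_0^T\lambda\dot\zeta\le\|\lambda\|_{L^2}\|\dot\zeta\|_{L^2(0,T)}$ absorbs the right--hand side and gives $\dot\zeta\in L^2(0,\infty)$. Differentiating (\ref{delay}) gives $\ddot\zeta=4\pi(\dot\lambda-F'(\zeta)\dot\zeta)$; since $F'$ is bounded along the bounded trajectory and $\dot\lambda\in L^2$, this places $\ddot\zeta\in L^2(0,\infty)$, hence $\dot\zeta\in H^1(0,\infty)$ and therefore $\dot\zeta(t)\to0$. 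Substituting back into (\ref{delay}) and using $\lambda(t)\to0$ then forces $F(\zeta(t))\to0$.

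It remains to upgrade $F(\zeta(t))\to0$ to genuine convergence, and this is where (\ref{ab}) enters. The $\omega$--limit set of the bounded continuous trajectory $\zeta(\cdot)$ is nonempty, connected, and contained in $\{F=0\}=Q$; since by (\ref{ab}) the set $Q$ contains no nondegenerate interval, every connected subset of $Q$ is a single point. Hence $\zeta(t)$ converges to some $q_+\in Q$, and the same argument on $(-\infty,0]$ yields $q_-\in Q$. Combined with task (a), this gives $(\psi(t),\dot\psi(t))\to(\psi_{q_\pm},0)$ in $L^2_{loc}(\R^3)\oplus L^2_{loc}(\R^3)$, as claimed.
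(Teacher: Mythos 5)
Your global architecture --- split $\psi=\psi_f+\psi_S$ via (\ref{sol_sum}), prove local decay of $\psi_f$, then drive the ODE (\ref{delay}) to the zero set $Q$ and read off the convergence of $\psi_S$ from the explicit retarded formula (\ref{psiSf}) --- is exactly the paper's. Your task (a) is essentially Section \ref{sect-splitting} in disguise: the cancellation you describe between the constant $\dot\zeta_0/(4\pi)$ carried by $g$ inside the light cone and the far-field tail $-\dot\zeta_0G$ of $\pi_{0,reg}$ is precisely what the paper implements by the cutoff rearrangement $(\psi_0,\pi_0)=(\varphi_0,\eta_0)+(\zeta_0\chi G,\dot\zeta_0\chi G)$ with $(\varphi_0,\eta_0)\in H^2(\R^3)\oplus H^1(\R^3)$ as in (\ref{vpH}); the compactly supported singular piece dies on bounded sets by strong Huygens (Lemma \ref{lemma-decay-G1}) and the $H^2\oplus H^1$ piece decays locally (Lemma \ref{lemma-decay-psi1}), yielding (\ref{psif-dec}) and hence $\lambda(t)\to 0$. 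One technical caveat: your unsplit $\psi_{f,reg}$ does \emph{not} have finite $\Ho^1\oplus L^2$ energy, since $\pi_{0,reg}$ carries the tail $-\dot\zeta_0 G\notin L^2(\R^3)$; so the ``energy conservation plus Huygens'' step must be run, as the paper does, only after the $\chi$-rearrangement, where both energy levels of the regular piece are finite.

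The genuine gap is in your task (b). Your dissipation bootstrap --- multiply (\ref{delay}) by $\dot\zeta$ to get $\dot\zeta\in L^2(0,\infty)$, differentiate to get $\ddot\zeta\in L^2(0,\infty)$, conclude $\dot\zeta(t)\to0$ and then $F(\zeta(t))\to0$ --- requires the \emph{global} bounds $\lambda\in L^2(0,\infty)$ and $\dot\lambda\in L^2(0,\infty)$, which you flag as ``expected'' but never prove, and which the paper nowhere provides: (\ref{psif-dec}) gives only the rateless qualitative convergence $\lambda(t)\to0$, and Lemma \ref{wdl}(iii) gives only $\dot\lambda\in L^2_{loc}([0,\infty))$. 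Upgrading these to $L^2(0,\infty)$ is a genuinely new ingredient (it is in fact true, via a pointwise-trace estimate of the type $\Vert u(0,\cdot)\Vert_{L^2(\R_t)}\le C\Vert(u_0,u_1)\Vert_{\Ho^1\oplus L^2}$ for the free wave applied to the $(\varphi_0,\eta_0)$ part, plus Huygens for the rest, but you neither prove nor cite such an estimate); without it your chain never starts, and your otherwise correct $\omega$-limit/connectedness argument --- which is sound, since by (\ref{ab}) every connected subset of $Q$ is a point --- has no input. The paper's Lemma \ref{PC2} shows all of this machinery is unnecessary: using only $\lambda(t)\to0$, if $\liminf\zeta=a<b=\limsup\zeta$, pick $\zeta_0\in(a,b)$ with $F(\zeta_0)\neq0$ by (\ref{ab}); for large $t$ the sign of $\dot\zeta=4\pi(\lambda-F(\zeta))$ at $\zeta=\zeta_0$ forbids crossings in one direction, contradicting the oscillation between $a$ and $b$; then $F(q_+)=0$ and $\dot\zeta(t)\to0$ drop out of (\ref{delay}) directly. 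Replacing your energy-dissipation step by this elementary barrier argument closes the gap; alternatively, you would have to supply the trace estimate as an additional lemma.
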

It suffices to prove Theorem~\ref{main-theorem} for $t\to+\infty$.
\section{Dispersion component}
\label{sect-splitting}
We will only consider the solution $\psi(x,t)$ restricted to $t\ge 0$.
In this section we extract  regular and singular parts from the  dispersion component  $\psi_f(x,t)$
and establish their  local decay.
First, we represent the initial data  $(\psi(0),\,\dot\psi(0))=(\psi_0,\pi_0)\in {\cal D}_F$ as
\[
(\psi_0,~\pi_0)=(\psi_{0,reg},~\pi_{0,reg})+(\zeta_0G, ~\dot\zeta_0 G)
=(\varphi_{0},~\eta_{0})+(\zeta_0\chi G, ~\dot\zeta_0 \chi G),
\]
where a cut-of function $\chi \in C_0^\infty(\R^3)$ satisfies
\begin{equation}\label{G1}
\chi (x)=\left\{\begin{array}{ll} 1,\quad |x|\le1\\
0,\quad |x|\ge 2
\end{array}\right.
\end{equation}
Let us show that
\begin{equation}\label{vpH} 
(\varphi_{0},~\eta_{0})\in H^2(\R^3)\oplus H^1(\R^3). 
\end{equation} 
Indeed,
\[
(\varphi_{0},~\eta_{0})=(\psi_0-\zeta_0\chi G,~\pi_0-\dot\zeta_0\chi G)
\in L^2(\R^3)\oplus L^2(\R^3),
\]
On the other hand,
\[
(\varphi_{0},~\eta_{0})=(\psi_{0,reg}+\zeta_0(1-\chi)G,~\pi_{0,reg}+\dot\zeta_0(1-\chi)G)
\in \Ho^2(\R^3)\oplus\Ho^1(\R^3).
\]
Now we split the dispersion component $\psi_f(x,t)$  as
\begin{equation}\label{psi-split}
\psi_f(x,t)=\varphi(x,t)+\psi_{G}(x,t),
\quad t\ge 0,
\end{equation}
where $\varphi$ and  $\psi_{G}$  are defined
as solutions to the following Cauchy problems:
\begin{eqnarray}
&&
\ddot\varphi(x,t)=\Delta\varphi(x,t),
\qquad (\varphi,\dot\varphi)\at{t=0}=(\varphi_{0},~\eta_{0}),
\label{KG-cp-1}
\\
\nonumber
\\
&&
\ddot\psi_{G}(x,t)=\Delta\psi_{G}(x,t),
\qquad (\psi_{G},\dot\psi_{G})\at{t=0}=(\zeta_0\chi G, \dot\zeta_0\chi G),
\label{KG-cp}
\end{eqnarray}
and study the decay properties of  $\psi_{G}$ and $\varphi$.
\begin{lemma}\label{lemma-decay-G1}
For the  solution $\psi_{G}(x,t)$ to (\ref{KG-cp}) the strong Huygens principle holds:
\begin{equation}\label{psi2-decay}
\psi_{G}(x,t)=0 ~~{\rm for}~~t\ge |x|+2.
\end{equation}
\end{lemma}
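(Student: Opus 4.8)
The plan is to identify $\psi_{G}$ as a solution of the free $3$D wave equation whose Cauchy data are \emph{compactly supported}, and then to invoke the \emph{strong} Huygens principle in its sharp, trailing-edge form. First I would observe that, by the definition (\ref{G1}) of the cut-off, both components of the data in (\ref{KG-cp}) vanish for $|x|\ge 2$, so they are supported in the ball $\{|x|\le 2\}$; the only singularity of $\zeta_0\chi G$ and $\dot\zeta_0\chi G$ is that of $G(x)=1/(4\pi|x|)$ at the centre $x=0$. In three space dimensions Kirchhoff's formula expresses the value $\psi_{G}(x,t)$ through a surface integral of the Cauchy data over the \emph{sphere} $S_t(x):=\{y:|y-x|=t\}$ alone, and this is exactly the information I shall exploit.

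The geometric core is a single inequality. For any $y\in S_t(x)$ the reverse triangle inequality gives $|y|\ge |y-x|-|x|=t-|x|$, so as soon as $t\ge|x|+2$ every point of the sphere satisfies $|y|\ge 2$. On such a sphere $\chi(y)=0$, whence both Cauchy data vanish identically on $S_t(x)$; moreover the sphere stays at distance $\ge 2$ from the origin, so the singularity of $G$ never lies on it. Kirchhoff's surface integral therefore vanishes and $\psi_{G}(x,t)=0$, which is precisely (\ref{psi2-decay}). I emphasise that the region $t\ge|x|+2$ lies \emph{behind} the wave front: finite propagation speed alone only yields vanishing ahead of the front, for $t<|x|-2$, so it is the sharp (strong) Huygens principle, expressing the absence of a trailing tail for compactly supported data in odd dimension, that does the real work here.

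The one delicate point — and the step I expect to be the main obstacle — is that $\chi G$ is merely $L^2$ and genuinely singular at the origin, so Kirchhoff's pointwise formula cannot be applied verbatim. I would circumvent this by approximation. Since $\chi G$ is smooth away from $x=0$, I choose smooth compactly supported functions $\chi G_\delta$ that coincide with $\chi G$ for $|x|\ge\delta$ and converge to $\chi G$ in $L^2(\R^3)$ as $\delta\to 0$. The classical strong Huygens principle applies to the smooth data $(\zeta_0\chi G_\delta,\dot\zeta_0\chi G_\delta)$, and for $t\ge|x|+2$ these data still vanish on $S_t(x)$ (there $|y|\ge 2>\delta$, so $\chi G_\delta=\chi G=0$), giving a vanishing solution. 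Passing to the limit $\delta\to 0$ and using the continuous dependence of the solution on the Cauchy data in $C([0,\infty),L^2_{loc}(\R^3))$ yields (\ref{psi2-decay}) for the true singular data and completes the proof.
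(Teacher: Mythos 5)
Your proof is correct, but it follows a genuinely different route from the paper's. The paper never invokes Kirchhoff's formula or an approximation argument: it observes that $\psi_G(x,t)=\zeta_0\dot\varphi_G(x,t)+\dot\zeta_0\varphi_G(x,t)$, where $\varphi_G$ solves the free wave equation with Cauchy data $(0,\chi G)$ --- the point being that $\chi G$, while not in $H^1(\R^3)$ (its gradient has a non-square-integrable $|x|^{-2}$ singularity), is perfectly admissible as \emph{velocity} data, so that $(0,\chi G)\in H^1(\R^3)\oplus L^2(\R^3)$ and the strong Huygens principle of \cite[Theorem XI.87]{RS3} applies directly to $\varphi_G$, hence also to $\dot\varphi_G$. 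This time-derivative trick disposes of the singularity of $G$ at the origin in one line, at the cost of citing the abstract Huygens theorem for finite-energy solutions. You instead attack the singular position data head-on: mollify $\chi G$ near the origin, apply the classical (smooth-data) strong Huygens principle via the geometric inequality $|y|\ge|y-x|-|x|\ge 2$ on the sphere $S_t(x)$ for $t\ge|x|+2$, and pass to the limit using continuity of the solution map on $L^2\oplus L^2$ data (which holds since $\cos(t\sqrt{-\Delta})$ is unitary on $L^2$ and the multiplier $\sin(t|\xi|)/|\xi|$ is bounded by $t$, giving convergence in $C([0,T],L^2)$ for every $T$). Your argument is more self-contained and in fact slightly more general --- it requires only $L^2$ data, not membership in the energy class --- while the paper's is shorter and avoids any limiting procedure. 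Two small points you handled correctly but should keep explicit: Kirchhoff's formula also involves $\nabla f$ on $S_t(x)$, which is harmless here because the regularized data vanish identically on the closed set $\{|y|\ge 2\}$ together with all derivatives; and at the borderline time $t=|x|+2$ the sphere touches $\{|y|=2\}$, where the data still vanish, so the conclusion holds on the closed region as stated in (\ref{psi2-decay}).
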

\begin{proof}
The solution $\varphi_G(x,t)$ to the free wave equation with initial data 
$(0, \chi G)\in H^1(\R^3)\oplus L^2(\R^3)$ satisfies the strong Huygens principle  due  to \cite[Theorem XI.87]{RS3}.
Further,
$$
\psi_G(x,t)=\zeta_0\dot\varphi_G(x,t)+\dot\zeta_0\varphi_G(x,t).
$$
Then (\ref{psi2-decay}) follows.
\end{proof}
The following lemma states a local decay of solutions to the free wave equation 
with regular initial data from $H^2(\R^3)\oplus H^1(\R^3)$.
\begin{lemma}\label{lemma-decay-psi1}
Let $\varphi(t)$ be a solution to (\ref{KG-cp-1}) 
with  initial data $\phi_0=(\varphi_0,\eta_0)\in H^2(\R^3)\oplus H^1(\R^3)$. Then
\begin{equation}\label{en-dec}
\Norm{(\varphi(t),\dot \varphi(t))}_{H^2(B_R)\oplus H^1(B_R)}\to 0,\quad t\to\infty,\quad\forall R>0,
\end{equation} 
where $B_R$ is the ball of radius $R$.
\end{lemma}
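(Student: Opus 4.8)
The plan is to prove the local decay by a density argument that combines the strong Huygens principle with the conservation of the first two energies of the free wave equation. First I would reduce to compactly supported smooth data: since $C_0^\infty(\R^3)\oplus C_0^\infty(\R^3)$ is dense in $H^2(\R^3)\oplus H^1(\R^3)$, for a given $\varepsilon>0$ I would choose data $(\varphi_0^\varepsilon,\eta_0^\varepsilon)$ supported in some ball $B_a$ with $\Norm{(\varphi_0-\varphi_0^\varepsilon,\,\eta_0-\eta_0^\varepsilon)}_{H^2\oplus H^1}<\varepsilon$, and denote by $\varphi^\varepsilon(t)$ the solution of (\ref{KG-cp-1}) with these data and by $w(t):=\varphi(t)-\varphi^\varepsilon(t)$ the solution with the difference data.

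For the compactly supported piece the strong Huygens principle in $\R^3$, exactly as invoked in Lemma~\ref{lemma-decay-G1} via \cite[Theorem XI.87]{RS3}, shows that $\varphi^\varepsilon(x,t)$ is supported in the spherical shell $t-a\le|x|\le t+a$. Hence $\varphi^\varepsilon(\cdot,t)$, together with all of its space-time derivatives, vanishes identically on the fixed ball $B_R$ as soon as $t>R+a$, so that $\Norm{(\varphi^\varepsilon(t),\dot\varphi^\varepsilon(t))}_{H^2(B_R)\oplus H^1(B_R)}=0$ for all large $t$.

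For the error $w(t)$ the key point is that the relevant norms are controlled \emph{uniformly in $t$} by conserved quantities. Because the free wave equation commutes with each $\partial_{x_i}$, both the energy $E_0(t)=\norm{\nabla w(t)}_{L^2}^2+\norm{\dot w(t)}_{L^2}^2$ and the higher energy $E_1(t)=\sum_{|\alpha|=2}\norm{\partial^\alpha w(t)}_{L^2}^2+\norm{\nabla\dot w(t)}_{L^2}^2$ are conserved and bounded by $\Norm{(\varphi_0-\varphi_0^\varepsilon,\,\eta_0-\eta_0^\varepsilon)}_{H^2\oplus H^1}^2<\varepsilon^2$. These directly dominate $\norm{\nabla w}_{L^2}$, $\norm{\partial^\alpha w}_{L^2}$ for $|\alpha|=2$, $\norm{\dot w}_{L^2}$ and $\norm{\nabla\dot w}_{L^2}$, which is every term of the target norm except the zeroth-order contribution $\norm{w(t)}_{L^2(B_R)}$. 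That last term I would control by the Sobolev inequality (\ref{sob}): on the bounded ball, H\"older's inequality gives $\norm{w(t)}_{L^2(B_R)}\le|B_R|^{1/3}\norm{w(t)}_{L^6(\R^3)}\le C_R\norm{\nabla w(t)}_{L^2(\R^3)}$, which is again bounded by $E_0(t)^{1/2}<\varepsilon$. Collecting the estimates yields $\Norm{(w(t),\dot w(t))}_{H^2(B_R)\oplus H^1(B_R)}\le C_R\,\varepsilon$ for all $t\ge 0$.

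Combining the two pieces by the triangle inequality, for every $t>R+a$ we obtain $\Norm{(\varphi(t),\dot\varphi(t))}_{H^2(B_R)\oplus H^1(B_R)}\le C_R\,\varepsilon$, and since $\varepsilon>0$ is arbitrary this gives (\ref{en-dec}). I expect the one genuinely delicate point to be the zeroth-order term $\norm{w}_{L^2(B_R)}$: the global $L^2$ norm of a finite-energy wave is not conserved and may in fact grow in $t$, so it cannot be controlled by energy alone. Restricting attention to the bounded ball $B_R$ and trading $L^2(B_R)$ for the conserved homogeneous norm $\norm{\nabla w}_{L^2(\R^3)}$ through (\ref{sob}) is precisely what renders the error bound uniform in time.
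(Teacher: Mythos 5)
Your proof is correct and takes essentially the same route as the paper: a splitting into a compactly supported part, annihilated on $B_R$ for large $t$ by the strong Huygens principle, plus a small remainder whose $H^2(B_R)\oplus H^1(B_R)$ norm is bounded uniformly in time by the conserved (homogeneous) energies, with the zeroth-order term handled via the embedding $\Ho^1(\R^3)\subset L^6(\R^3)$ exactly as in (\ref{en-dec1}). The only cosmetic difference is that the paper realizes the approximation by cutting off the given data with $\chi_r=\chi(x/r)$ rather than by density of $C_0^\infty(\R^3)$, which changes nothing of substance.
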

\begin{proof}
For any $r\ge 1$ denote  $\chi_r=\chi(x/r)$, where $\chi(x)$ is  a cut-off function defined in (\ref{G1}).
Let $u_r(t)$ and $v_r(t)$  be the solutions to the free wave
equations with the initial data $\chi_r \phi_0$
and $(1-\chi_r) \phi_0$, respectively, so that
$u(t)=u_r(t)+v_r(t)$. By  the strong Huygens principle 
\[
u_r(x,t)=0 ~~{\rm for}~~t\ge |x|+2r.
\]
To conclude (\ref{en-dec}), it remains to note that
\begin{eqnarray}\nonumber
\Vert(v_r(t),\dot v_r(t))\Vert_{H^2(B_R)\oplus H^1(B_R)}&\le& C(R)
\Vert(v_r(t),\dot v_r(t))\Vert_{\Ho^2(\R^3)\oplus H^1(\R^3)}
= C(R)\Vert (1-\chi_r) \phi_0\Vert_{\Ho^2(\R^3)\oplus H^1(\R^3)}\\
\label{en-dec1}
&\le& C(R)\Vert (1-\chi_r) \phi_0\Vert_{H^2(\R^3)\oplus H^1(\R^3)}
\end{eqnarray}
due to the energy conservation for the free wave equation. We also use the embedding
$\Ho^1(\R^3)\subset L^6(\R^3)$. The right-hand side of (\ref{en-dec1})
could be made arbitrarily small if $r\ge 1$ is  sufficiently large.
\end{proof}
Finally, (\ref{psi-split}) , (\ref{psi2-decay}) , (\ref{vpH}) and Lemma \ref {lemma-decay-psi1} imply
\begin{equation}\label{psif-dec}
\Norm{(\psi_f(t),\dot\psi_f(t))}_{H^2(B_R)\oplus H^1(B_R)}\to 0,\quad t\to\infty,\quad\forall R>0.
\end{equation}
\section{Singular component}
\label{sect-spectral}
Due to (\ref{psif-dec}) to prove Theorem~\ref{main-theorem}  it suffices
to deduce the convergence to stationary states for the singular component $\psi_S(x,t)$ of the solution.
\begin{proposition}\label{propfin}
Let assumptions of Theorem \ref{main-theorem} hold, and let $\psi_S(t)$ be 
a solution to  (\ref{CP2}). Then
\[
(\psi_S(t),\dot\psi_S(t))\to (\psi_{q_{\pm}},~0),\quad t\to\infty,
\]
where the convergence holds in $L^2_{loc}(\R^3)\oplus L^2_{loc}(\R^3)$.
\end{proposition}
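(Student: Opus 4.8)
The plan is to reduce the whole statement to the large-time behaviour of the scalar function $\zeta(t)$, because the singular field is an explicit spherical wave built from $\zeta$. First I would solve (\ref{CP2}) exactly as in (\ref{Smir})--(\ref{Smir1}): the retarded potential of the point source $\zeta(t)\delta(x)$ with vanishing Cauchy data is
\[
\psi_S(x,t)=\frac{\theta(t-|x|)}{4\pi|x|}\,\zeta(t-|x|),\qquad t\ge 0,
\]
so that $\dot\psi_S(x,t)=\dfrac{\dot\zeta(t-|x|)}{4\pi|x|}$ for $t>|x|$. Thus it suffices to prove that $\zeta(t)\to q_+\in Q$ and $\dot\zeta(t)\to 0$.

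Second, I would check that the source in (\ref{delay}) decays, $\lambda(t)\to 0$ as $t\to\infty$. Since $\lambda(t)=\lim_{x\to 0}\psi_f(x,t)$ and, by (\ref{psi2-decay}), the singular part $\psi_G$ of $\psi_f$ vanishes near the origin once $t\ge 2$, we have $\lambda(t)=\varphi(0,t)$ for large $t$; the local decay (\ref{en-dec}) combined with the Sobolev embedding $H^2(B_R)\subset C(\bar B_R)$ then gives $\varphi(0,t)\to 0$. This decay of $\lambda$ is the only property of the dispersive component the rest of the argument uses.

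Third --- the core of the proof --- I would analyse the ODE (\ref{delay}), written as $\frac{1}{4\pi}\dot\zeta=\lambda(t)-F(\zeta)$, where $\zeta$ is bounded by (\ref{zeta-bound}) and $\lambda(t)\to 0$. Set $\omega=[\liminf_{t\to\infty}\zeta(t),\,\limsup_{t\to\infty}\zeta(t)]$; by continuity and boundedness of $\zeta$ this is exactly its (compact, interval-shaped) set of partial limits. I claim $\omega\subset Q$. If some $y\in\omega$ had $F(y)>0$, then $F\ge c>0$ on a neighbourhood $(y-\delta,y+\delta)$, while $|\lambda(t)|<c/2$ for $t$ large; hence $\dot\zeta<0$ throughout that neighbourhood, so whenever $\zeta$ enters it the trajectory is driven monotonically below $y-\delta$ and, the sign of $\dot\zeta$ forming a barrier at $y-\delta$, can never return. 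This traps $\zeta$ below $y-\delta$ for all large $t$, contradicting $y\in\omega$; the case $F(y)<0$ is symmetric. Hence $F\equiv 0$ on $\omega$, i.e. $\omega\subset Q$. Being an interval inside $Q$, assumption (\ref{ab}) forces $\omega=\{q_+\}$ to be a single point with $q_+\in Q$, so $\zeta(t)\to q_+$; then $\dot\zeta(t)=4\pi\big(\lambda(t)-F(\zeta(t))\big)\to 0$ by continuity of $F$.

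Finally, I would substitute these limits into the explicit formulas. For $t\ge R$ and $x\in B_R$ we have $\psi_S(x,t)-\psi_{q_+}(x)=\dfrac{\zeta(t-|x|)-q_+}{4\pi|x|}$, and passing to spherical coordinates gives
\[
\Norm{\psi_S(t)-\psi_{q_+}}_{L^2(B_R)}^2=\frac{1}{4\pi}\int_{t-R}^{t}|\zeta(s)-q_+|^2\,ds,
\qquad
\Norm{\dot\psi_S(t)}_{L^2(B_R)}^2=\frac{1}{4\pi}\int_{t-R}^{t}|\dot\zeta(s)|^2\,ds,
\]
both of which tend to $0$ since $\zeta(s)\to q_+$ and $\dot\zeta(s)\to 0$. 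This is the desired convergence in $L^2_{loc}\oplus L^2_{loc}$. I expect the main obstacle to be the third step: turning the heuristic ``$F(y)\ne 0$ is impossible on $\omega$'' into a rigorous trapping argument, and using the empty-interior hypothesis (\ref{ab}) to collapse the limit interval $\omega$ to a single equilibrium.
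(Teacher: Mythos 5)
Your proposal is correct and follows essentially the same route as the paper's proof: the explicit retarded spherical wave (\ref{psiSf}), the source decay $\lambda(t)\to 0$ from (\ref{psif-dec}), a sign-barrier argument in the ODE (\ref{delay}) combined with the a priori bound (\ref{zeta-bound}) and assumption (\ref{ab}) to force $\zeta(t)\to q_+\in Q$ and $\dot\zeta(t)\to 0$, and then convergence of $(\psi_S,\dot\psi_S)$ in $L^2_{loc}\oplus L^2_{loc}$. The only differences are organizational: you prove $F\equiv 0$ on the whole limit interval $\omega$ and collapse it to a point by (\ref{ab}) (which yields $q_+\in Q$ automatically), whereas the paper picks a single $\zeta_0\in(a,b)$ with $F(\zeta_0)\neq 0$ and rules out upward crossings, and your explicit spherical-coordinates computation of the $L^2(B_R)$ norms spells out what the paper deduces directly from (\ref{zetalim}) and (\ref{tetalim}).
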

\begin{proof}
The unique solution to (\ref{CP2}) is the  spherical wave 
\begin{equation}\label{psiSf}
\psi_S(x,t)=\frac{\theta(t-|x|)}{4\pi|x|}\zeta(t-|x|),\quad t\ge 0,
\end{equation}
cf. (\ref{Smir})-(\ref{Smir1}).
Then a priori bound (\ref{zeta-bound}) and equation (\ref{delay}) imply that 
\[
(\psi_S(t),\dot\psi_S(t))\in   L^2(B_R)\oplus L^2(B_R),\quad 0\le  R<t.
\]
First, we obtain  a convergence of $\zeta(t)$.
\begin{lemma}\label{PC2}
There exists the limit
 \begin{equation}\label{zetalim}
\zeta(t)\to q_+,\quad t\to\infty,
\end{equation}
where $q_+\in Q$.
\end{lemma}
\begin{proof}
From (\ref{zeta-bound}) it follows that $\zeta(t)$ has the upper and lower limits:
\[
\underline{\lim}_{t\to\infty}\zeta(t)=a,\quad
\overline{\lim}_{t\to\infty}\zeta(t)=b.
\]
Suppose  that $a<b$.
Then the  trajectory $\zeta(t)$ oscillates between $a$ and $b$.
Assumption (\ref{ab}) implies that   $F(\zeta_0)\not =0$ for some $\zeta_0\in (a,b)$. 
For the concreteness, let us assume that $F(\zeta_{0})>0$.
The convergence (\ref{psif-dec}) implies that
\begin{equation}\label{lam-dec}
\lambda(t)=\psi_f(0,t)\to 0,\qquad t\to\infty.
\end{equation}
Hence, for  sufficiently large $T$ we have
\[
-F(\zeta_{0})+\lambda(t)<0, \quad t\ge T.
\]
Then for $t\ge T$ the transition of the trajectory from left to right through the point $\zeta_0$ is impossible by (\ref{delay}).
Therefore, $a=b=q_+$. Finally  $F(q_+)=0$ by (\ref{delay}).
\end{proof}
Further, 
\begin{equation}\label{tetalim}
\theta(t-|x|)\to 1, \quad t\to\infty
\end{equation}
uniformly in  $|x|\le R$.
Then (\ref{psiSf}) and (\ref{zetalim})  imply that
\[
\psi_S(t)\to q_+G, \quad t\to\infty,
\]
where the convergence holds in $L^2_{loc}(\R^3)$.
It remains to deduce the convergence of $\dot\psi_S(t)$.
We have
\[
\dot\psi_S(x,t)=\frac{\theta(t-|x|)}{4\pi|x|}\dot\zeta(t-|x|), \quad t>|x|.
\]
From  (\ref{zetalim}),  (\ref{delay}) and (\ref{lam-dec})  it follows  that
$\dot\zeta(t)\to 0$ as $ t\to\infty$.
Then
$$
\dot\psi_S(t)\to 0,\qquad t\to\infty
$$
in $L^2_{loc}(\R^3)$ by (\ref{tetalim}).
This completes the proof of  Proposition \ref{propfin} and 
Theorem~\ref{main-theorem}.
\end{proof}
\appendix
\setcounter{equation}{0}
\protect\renewcommand{\thesection}{\Alph{section}}
\protect\renewcommand{\theequation}{\thesection.\arabic{equation}}
\protect\renewcommand{\thesubsection}{\thesection.\arabic{subsection}}
\protect\renewcommand{\thetheorem}{\Alph{section}.\arabic{theorem}}
\setcounter{equation}{0}
\section{Appendix}
\label{nonlin-sect}
Here we sketch main steps of the proof \cite[Theorem 3.1]{NP}. 
First we adjust the nonlinearity $F$ so that it becomes Lipschitz-continuous.
Define
\begin{equation}\label{Lambda}
\Lambda(\Psi_0)=\sup\{|\zeta|: \zeta\in\R,\, U(\zeta)\le H_F(\Psi_0)\},
\end{equation}
where $\Psi_0=\Psi(0)\in {\cal D}_F$ is the initial data from Theorem \ref{theorem-well-posedness}. 
Then we may pick a modified potential function
$\tilde U(\zeta)\in C^2(\R)$, so that
\begin{equation}\label{Lambda1}
\left\{\begin{array}{ll}
\tilde U(\zeta)= U(\zeta),\quad |\zeta|\le\Lambda(\Psi_0)\\\\
\tilde U(\zeta)>H_F(\Psi_0),\quad |\zeta|>\Lambda(\Psi_0),
\end{array}\right.
\end{equation}
and the function $\tilde F(\zeta)=\tilde U'(\zeta)$ is Lipschitz continuous:
\begin{equation}\label{Lambda22}
|\tilde F(\zeta_1)-\tilde F(\zeta_2)|\le C|\zeta_1-\zeta_2|,\quad\zeta_1,\zeta_2\in\R.
\end{equation}
We consider the Cauchy problem for (\ref{KG})) with the modified nonlinearity $\tilde F$.
According to Lemma \ref{wdl} there exist  the unique solution 
$\psi_f(x,t)\in C([0,\infty),L^2_{loc}(\R^3))$ to  (\ref{CP1}) and $\lambda(t)=\lim\limits_{x\to 0}\psi_f(x,t)\in C([0,\infty))$.
The following lemma follows by  the contraction mapping principle.
\begin{lemma}\label{LLWP}
Let conditions  (\ref {Lambda1})-(\ref {Lambda22}) be satisfies. 
Then there exists $\tau>0$ such that  the Cauchy problem
\begin{equation}\label{delay1}
\frac {1}{4\pi}\dot\zeta(t)+\tilde F(\zeta(t))=\lambda(t),\quad \zeta(0)=\zeta_{0}
\end{equation}
has a unique solution $\zeta\in C^1([0,\tau])$.
\end{lemma}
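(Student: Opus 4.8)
The plan is to recast the Cauchy problem (\ref{delay1}) as a fixed-point equation and invoke the contraction mapping principle on the Banach space $C([0,\tau])$ equipped with the supremum norm. First I would rewrite (\ref{delay1}) in the equivalent integral form
\[
\zeta(t)=\zeta_0+4\pi\int_0^t\big(\lambda(s)-\tilde F(\zeta(s))\big)\,ds,
\]
and define the map $\mathcal A\colon C([0,\tau])\to C([0,\tau])$ by letting $(\mathcal A\zeta)(t)$ be the right-hand side above. Since $\lambda\in C([0,\infty))$ by Lemma \ref{wdl}(ii) and $\tilde F$ is continuous, the integrand $s\mapsto\lambda(s)-\tilde F(\zeta(s))$ is continuous for any $\zeta\in C([0,\tau])$, so $\mathcal A$ indeed maps $C([0,\tau])$ into itself. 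A fixed point of $\mathcal A$ is precisely a continuous solution of the integral equation.

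Next I would establish the contraction estimate. For $\zeta_1,\zeta_2\in C([0,\tau])$ the $\zeta_0$-terms cancel, and using the global Lipschitz bound (\ref{Lambda22}) on $\tilde F$,
\[
|(\mathcal A\zeta_1)(t)-(\mathcal A\zeta_2)(t)|\le 4\pi\int_0^t|\tilde F(\zeta_1(s))-\tilde F(\zeta_2(s))|\,ds\le 4\pi C\tau\,\Norm{\zeta_1-\zeta_2}_{C([0,\tau])}.
\]
Taking the supremum over $t\in[0,\tau]$ yields $\Norm{\mathcal A\zeta_1-\mathcal A\zeta_2}\le 4\pi C\tau\,\Norm{\zeta_1-\zeta_2}$, so it suffices to fix any $\tau<1/(4\pi C)$, which makes $4\pi C\tau<1$ and $\mathcal A$ a contraction. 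The crucial point is that (\ref{Lambda22}) provides a \emph{global} Lipschitz constant: this is exactly why no a priori restriction to a ball in $C([0,\tau])$ is needed, and the contraction factor depends only on $\tau$ and not on the size of $\zeta$. By the Banach fixed-point theorem, $\mathcal A$ then has a unique fixed point $\zeta\in C([0,\tau])$ solving the integral equation.

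The final step is a regularity bootstrap: since $\zeta$ is continuous and $\lambda\in C([0,\tau])$, the integrand is continuous, so by the fundamental theorem of calculus $\zeta$ is continuously differentiable with $\dot\zeta(t)=4\pi(\lambda(t)-\tilde F(\zeta(t)))$; that is, $\zeta\in C^1([0,\tau])$ and satisfies (\ref{delay1}) together with $\zeta(0)=\zeta_0$. Uniqueness within $C([0,\tau])$ from the contraction principle immediately gives uniqueness among $C^1$ solutions, since any such solution is continuous and satisfies the integral equation. I do not expect a genuine obstacle here — the argument is the textbook Picard–Lindelöf scheme — and the only points requiring care are feeding in the two hypotheses correctly: the global Lipschitz property (\ref{Lambda22}) to close the contraction, and the continuity of $\lambda$ from Lemma \ref{wdl} to close the bootstrap to $C^1$.
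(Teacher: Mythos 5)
Your proof is correct and follows exactly the route the paper indicates: the paper proves Lemma \ref{LLWP} simply by remarking that it ``follows by the contraction mapping principle,'' and your Picard--Lindel\"of argument (integral reformulation, contraction on $C([0,\tau])$ via the global Lipschitz bound (\ref{Lambda22}) with $\tau<1/(4\pi C)$, continuity of $\lambda$ from Lemma \ref{wdl}, and the $C^1$ bootstrap) is precisely the standard filling-in of that remark. As a bonus, your choice of $\tau$ depending only on the Lipschitz constant is consistent with the paper's later continuation argument, which requires $\tau$ to depend only on $\Lambda(\Psi_0)$.
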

Denote 
$$
\psi_S(t,x):=\frac{\theta(t-|x|)}{4\pi|x|}\zeta(t-|x|), \quad t\in [0,\tau],
$$
with $\zeta$ from Lemma \ref{LLWP}.
Now we establish the local well-posedness.
\begin{proposition}\label{TLWP}
Let  the conditions  (\ref {Lambda1})--(\ref {Lambda22}) hold.
Then the function $\psi(x,t):= \psi_f(x,t)+\psi_S(x,t)$ 
is a unique strong  solution to the system
\begin{equation}\label{CP}
\left\{\begin{array}{c}
\ddot \psi(x,t)=\Delta\psi(x,t)+\zeta(t)\delta(x)\\\\
\lim\limits_{x\to 0}(\psi(x,t)-\zeta(t)G(x))=\tilde F(\zeta(t))
\end{array}\right|\quad x\in\R^3,\quad t\in [0,\tau].
\end{equation}
 with initial data  
\[
\psi(0) = \psi_0\in D_{\tilde F},\quad\dot\psi(0)  =  \pi_0\in\dot D,
\]
and satisfies
\begin{equation}\label{CDD}
(\psi(t),\dot\psi(t))\in {\cal D}_{\tilde F},\quad t\in [0,\tau].
\end{equation}
\end{proposition}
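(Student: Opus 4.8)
The plan is to verify directly that the explicitly built function $\psi=\psi_f+\psi_S$ meets every requirement of the proposition, taking advantage of the fact that the construction is non-circular: by Lemma~\ref{wdl} the dispersive part $\psi_f$, and hence $\lambda(t)=\lim_{x\to0}\psi_f(x,t)$, depend only on the initial data and not on $\zeta$; Lemma~\ref{LLWP} then produces $\zeta\in C^1([0,\tau])$, and $\psi_S$ is fixed by (\ref{psiSf}). No further fixed-point argument is needed, so the task reduces to checking (a) the wave equation with point source, (b) the matching condition, (c) membership in ${\cal D}_{\tilde F}$, and (d) uniqueness. The initial conditions hold automatically since $\psi_S(\cdot,0)=\dot\psi_S(\cdot,0)=0$, whence $\psi(\cdot,0)=\psi_0$ and $\dot\psi(\cdot,0)=\pi_0$.

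For (a) I would note that $\psi_f$ solves the free equation (\ref{CP1}) while $\psi_S$ is the retarded potential of the point source, solving $\ddot\psi_S=\Delta\psi_S+\zeta(t)\delta(x)$ in the sense of distributions exactly as in (\ref{Smir})--(\ref{Smir1}); adding these gives the first line of (\ref{CP}). For (b) the key computation is
\[
\psi_S(x,t)-\zeta(t)G(x)=\frac{\zeta(t-|x|)-\zeta(t)}{4\pi|x|}\to-\frac{\dot\zeta(t)}{4\pi},\quad x\to0,
\]
valid because $\zeta\in C^1$; together with $\psi_f(x,t)\to\lambda(t)$ from Lemma~\ref{wdl}(ii) this yields $\lim_{x\to0}(\psi(x,t)-\zeta(t)G(x))=\lambda(t)-\dot\zeta(t)/(4\pi)$, which equals $\tilde F(\zeta(t))$ precisely by the ODE (\ref{delay1}). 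The same identity also verifies the constraint $\lim_{x\to0}\psi_{reg}=\tilde F(\zeta(t))$ built into the domain $D_{\tilde F}$.

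The main obstacle is (c), namely $\psi_{reg}(t):=\psi(t)-\zeta(t)G\in\Ho^2(\R^3)$ and $\dot\psi(t)-\dot\zeta(t)G\in\Ho^1(\R^3)$. The difficulty is that $\psi_S$ by itself is too singular: the factor $1/|x|$ gives a pole at the origin, and the factor $\theta(t-|x|)$ gives a jump across the light cone $|x|=t$, which is not even in $H^1$. Both singularities must disappear by cancellation. The origin pole is removed by subtracting $\zeta(t)G$, as above. For the cone I would decompose $\psi_f=\psi_{f,reg}+g$ as in Lemma~\ref{wdl} and pair $g$ with $\psi_S$: computing the one-sided limits at $|x|=t$ and using the compatibility $\zeta(0)=\zeta_0$ shows that the jump $\zeta_0/(4\pi|x|)$ of $g$ is exactly cancelled by the jump $-\zeta_0/(4\pi|x|)$ of $\psi_S$, so $g+\psi_S$ is continuous across the cone. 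The analogous differentiation in $t$ shows the surface measures $\pm\zeta_0\,\delta(t-|x|)/(4\pi|x|)$ carried by $\dot g$ and $\dot\psi_S$ cancel as well, so $\dot\psi$ is a genuine function. Combined with $\psi_{f,reg}\in C([0,\infty),\Ho^2)$ this should give the claimed regularity; one still has to confirm that the distributional Laplacian $\Delta\psi_{reg}$ carries no residual Dirac mass at the origin and lies in $L^2$, which again follows from the explicit cancellation and $\zeta\in C^1([0,\tau])$.

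Finally (d) follows by reversing the construction: any strong solution of (\ref{CP}) in ${\cal D}_{\tilde F}$ splits as $\psi_f+\psi_S$ with $\psi_f$ the free evolution of its data---unique by Lemma~\ref{wdl}---and $\psi_S$ the retarded potential of its own coefficient $\zeta(t)$; the matching condition forces $\zeta$ to solve (\ref{delay1}), and that Cauchy problem has a unique solution on $[0,\tau]$ by Lemma~\ref{LLWP}. Hence $\psi$ is unique, completing the proof.
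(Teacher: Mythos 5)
Your steps (a), (b) and (d) coincide with the paper's proof: the limit computation $\lim_{x\to0}(\psi-\zeta G)=\lambda(t)-\dot\zeta(t)/(4\pi)=\tilde F(\zeta(t))$ via (\ref{delay1}), the addition of the two wave equations, and the uniqueness by reversal (the matching condition forces $\tilde\zeta$ to solve (\ref{delay1}), then $\tilde\psi-\psi_S$ solves the free Cauchy problem). The divergence is in step (c), and that is where your argument has a genuine gap. The paper does not do your cone-cancellation computation at all: it subtracts $\zeta(t)G_1$ with the mollified Green's function $G_1(x)=G(x)e^{-|x|}$, notes that $\psi_{reg,1}=\psi-\zeta G_1$ solves the \emph{inhomogeneous} free wave equation with source $(\zeta(t)-\ddot\zeta(t))G_1\in L^2(\R^3)$ and $H^2\oplus H^1$ data, and cites \cite[Lemma 3.2]{NP} for persistence of regularity; since $\Delta(G_1-G)=G_1\in L^2(\R^3)$, one has $G_1-G\in\Ho^2(\R^3)$ and (\ref{CDD}) follows. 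The crucial input is $\ddot\zeta\in L^2([0,\tau])$, obtained by differentiating (\ref{delay1}) and invoking Lemma~\ref{wdl}~(iii), i.e.\ $\dot\lambda\in L^2_{loc}$.

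Your closing claim that $\Delta\psi_{reg}\in L^2$ ``follows from the explicit cancellation and $\zeta\in C^1([0,\tau])$'' is false as stated. Inside the light cone the singular part of $\psi_{reg}$ is $u(x,t)=\bigl(w(t-|x|)-w(t)\bigr)/(4\pi|x|)$ with $w=\zeta-\xi$, $\xi(t)=\zeta_0+t\dot\zeta_0$; after the Dirac masses at the origin cancel, $\Delta u=\ddot w(t-|x|)/(4\pi|x|)$, and $\Vert\Delta u(t)\Vert_{L^2}^2=\int_0^t|\ddot w(s)|^2\,ds$. So you need precisely $\ddot\zeta\in L^2([0,\tau])$, which mere $C^1$ regularity of $\zeta$ does not give; it comes only from $\dot\lambda\in L^2_{loc}$ (Lemma~\ref{wdl}~(iii)) fed back through the ODE, an ingredient your proof never invokes. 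A second unchecked point: for $\psi_{reg}(t)\in\Ho^2(\R^3)$ you must also rule out a jump of $\nabla\psi_{reg}$ across the cone $|x|=t$ (a first-derivative jump puts a surface delta into the second derivatives), and this requires the compatibility $\dot\zeta(0)=\dot\zeta_0$, not just $\zeta(0)=\zeta_0$. It does hold --- by (\ref{g-lim}) and the domain condition $\lim_{x\to0}\psi_{0,reg}(x)=\tilde F(\zeta_0)$ one gets $\lambda(0+)=\tilde F(\zeta_0)+\dot\zeta_0/(4\pi)$, whence (\ref{delay1}) gives $\dot\zeta(0)=\dot\zeta_0$ --- but it is a fact that must be stated and proved, since without it your decomposition leaves $\dot\psi$ continuous yet $\psi_{reg}(t)\notin\Ho^2$. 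With these two repairs your direct computation would become a legitimate, more self-contained alternative to the paper's appeal to \cite[Lemma 3.2]{NP}; as written, the regularity step (c) is not established.
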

\begin{proof}
Since $\zeta(t)$ solves (\ref{delay1}) one has 
\begin{equation}\label{lim_zeta}
\lim_{x \to 0}\, ( \psi(t,x)\!-\!\zeta(t) G(x))=\lambda(t)+
\lim_{x\to 0}\Big(\frac{\theta(t-|x|)\zeta(t-|x|)}{4\pi|x|}-\frac{\zeta(t)}{4\pi|x|}\Big)
=\lambda(t)-\frac{1}{4\pi}\dot\zeta(t)=\tilde F(\zeta(t)).
\end{equation}
Therefore, the second equation of (\ref{CP}) is satisfied.  
Further,
$$
\ddot\psi=\ddot\psi_f+\ddot\psi_S=\Delta\psi_f
+\Delta\psi_S+\zeta\delta=\Delta\psi+\zeta\delta
$$
and $\psi$ solves the first equation of (\ref{CP}) then.  Let us check (\ref{CDD}).
Note that  the function  $\psi_{reg,1}(x,t)=\psi(x,t)-\zeta(t) G_1(x)$, where $G_1(x)=G(x)e^{-|x|}$,
is a solution to
\[
\ddot\psi_{reg,1}(x,t)=\Delta\psi_{reg,1}(x,t)+(\zeta(t)-\ddot\zeta(t)) G_1(x)
\]
with  initial data from $H^2\oplus H^1$. Lemma \ref{wdl}-iii) and equation (\ref{delay1}) imply that
$\ddot\zeta\in L^2([0,\tau])$. Hence, 
\[
(\psi_{reg,1}(x,t),\dot\psi_{reg,1}(x,t))\in  H^2\oplus H^1,\quad t\in [0,\tau] 
\] 
by \cite[Lemma 3.2]{NP}. Therefore,
\[
\psi_{reg}(x,t)=\psi(x,t)-\zeta(t) G(x)=\psi_{reg,1}(x,t)+\zeta(t)(G_1(x)-G(x))
\]
satisfies
$(\psi_{reg}(t),\dot\psi_{reg}(t))\in \Ho^2(\R^3)\oplus\Ho^1(\R^3)$, $t\in [0,\tau]$,
and (\ref{CDD}) holds then.

Suppose now that $\tilde\psi=\tilde\psi_{reg}+\tilde\zeta G$, such that 
$(\tilde\psi,\dot{\tilde\psi)}\in {\cal D}_{\tilde F}$,
 is another strong solution of (\ref{CP}). 
Then, by reversing the above argument, the second equation of (\ref{CP})
 implies that $\tilde\zeta$ solves the Cauchy problem
(\ref{delay1}). The uniqueness of the solution of (\ref{delay1}) implies that
$\tilde\zeta=\zeta$. Then, defining
$$
\psi_S(t,x):=\frac{\theta(t-|x|)}{4\pi|x|}\zeta(t-|x|), \quad t\in [0,\tau],
$$
for $\tilde\psi_f=\tilde\psi-\psi_S$ one obtains
$$
\ddot{\tilde\psi}_f=\ddot{\tilde\psi}-\ddot\psi_S=\Delta\tilde\psi_{reg}-(\Delta\psi_S+\zeta\delta)=
\Delta(\tilde\psi_{reg}-(\psi_S-\zeta G))=\Delta\tilde\psi_f\,,
$$
i.e $\tilde\psi_f$ solves the Cauchy problem (\ref{CP1}). 
Hence, $\tilde\psi_f=\psi_f$ by the uniqueness of the solution to
(\ref{CP1}), and then $\tilde\psi=\psi$.
\end{proof}
According to \cite[Lemma 3.7]{NP}
\begin{equation}\label{cHFT}
{\cal H}_{\tilde F}(\Psi(t))=\Vert\dot\psi(t)\Vert^2+\Vert\nabla\psi_{reg}(t)\Vert^2
+\tilde U(\zeta(t))=const,\quad t\in [0,\tau].
\end{equation}
\begin{lemma}\label{cor1}
The following identity holds
\begin{equation}\label{UtU}
\tilde U(\zeta(t))=U(\zeta(t)), \quad t\in [0,\tau].
\end{equation}
\end{lemma}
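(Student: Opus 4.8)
The plan is to prove (\ref{UtU}) by showing that the trajectory $\zeta(t)$ never leaves the region $\{|\zeta|\le\Lambda(\Psi_0)\}$ on which, by the first line of (\ref{Lambda1}), the modified potential $\tilde U$ coincides with $U$. Once that confinement is established, (\ref{UtU}) is immediate. This is an energy-trapping argument that exploits precisely the way $\tilde U$ was built in (\ref{Lambda1}).

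First I would check that the initial value already lies in the good region, $|\zeta_0|\le\Lambda(\Psi_0)$. Evaluating the conserved energy of Theorem \ref{theorem-well-posedness} at $\Psi_0$ and discarding the nonnegative kinetic and gradient contributions gives $U(\zeta_0)\le H_F(\Psi_0)$; hence $\zeta_0$ belongs to the set defining $\Lambda(\Psi_0)$ in (\ref{Lambda}), so $|\zeta_0|\le\Lambda(\Psi_0)$. Consequently $\tilde U(\zeta_0)=U(\zeta_0)$ by the first line of (\ref{Lambda1}), and therefore the conserved modified energy coincides at the initial time with the original energy level, ${\cal H}_{\tilde F}(\Psi_0)=H_F(\Psi_0)$.

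Next I would use conservation of the modified energy (\ref{cHFT}) to cap the potential along the entire trajectory. Since the kinetic and gradient terms in ${\cal H}_{\tilde F}$ are nonnegative, for every $t\in[0,\tau]$ one has
\[
\tilde U(\zeta(t))\le {\cal H}_{\tilde F}(\Psi(t))={\cal H}_{\tilde F}(\Psi_0)=H_F(\Psi_0).
\]
I then invoke the contrapositive of the second line of (\ref{Lambda1}): the energy barrier $\tilde U(\zeta)>H_F(\Psi_0)$ for $|\zeta|>\Lambda(\Psi_0)$ means that $\tilde U(\zeta(t))\le H_F(\Psi_0)$ forces $|\zeta(t)|\le\Lambda(\Psi_0)$. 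Hence $\zeta(t)$ remains in the good region for all $t\in[0,\tau]$, and the first line of (\ref{Lambda1}) then yields $\tilde U(\zeta(t))=U(\zeta(t))$, which is exactly (\ref{UtU}).

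I do not anticipate a serious obstacle: the whole purpose of the modification (\ref{Lambda1}) is to erect an energy barrier at the level $H_F(\Psi_0)$ so that the conserved energy traps $\zeta(t)$ below $\Lambda(\Psi_0)$. The only point that genuinely requires care is the consistency of the two energies at $t=0$, namely that $\tilde U(\zeta_0)=U(\zeta_0)$ so that ${\cal H}_{\tilde F}(\Psi_0)=H_F(\Psi_0)$; this in turn rests on the a priori bound $|\zeta_0|\le\Lambda(\Psi_0)$ established in the second step. (One should also be mindful of the normalization of the kinetic term in (\ref{cHFT}) relative to Theorem \ref{theorem-well-posedness}, but this does not affect the nonnegativity used to obtain the bound above.)
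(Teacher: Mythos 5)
Your proof is correct and follows essentially the same route as the paper: both establish $|\zeta_0|\le\Lambda(\Psi_0)$ from ${\cal H}_F(\Psi_0)\ge U(\zeta_0)$, deduce ${\cal H}_{\tilde F}(\Psi_0)={\cal H}_F(\Psi_0)$, and then use conservation of the modified energy (\ref{cHFT}) together with the barrier in the second line of (\ref{Lambda1}) to trap $\zeta(t)$ in the region $|\zeta(t)|\le\Lambda(\Psi_0)$ where $\tilde U=U$. Your write-up is in fact slightly more complete, since the paper stops at the bound (\ref{zeta_bound}) and leaves the final step to (\ref{UtU}) implicit.
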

\begin{proof}
First note that
\[
{\cal H}_{F}(\Psi_0)\ge  U(\zeta_{0}).
\]
Therefore, $|\zeta_0|\le\Lambda(\Psi_0)$, and then  $\tilde U(\zeta_0)=U(\zeta_0)$, 
${\cal H}_{\tilde F}(\Psi_0)={\cal H}_{F}(\Psi_0)$.
Further, 
\[
{\cal H}_{F}(\Psi_0)={\cal H}_{\tilde F}(\Psi(t))\ge \tilde U(\zeta(t)),\quad t\in [0,\tau].
\]
Hence (\ref{Lambda1}) implies that
\begin{equation}\label{zeta_bound}
|\zeta(t)|\le\Lambda(\Psi_0),\quad t\in [0,\tau].
\end{equation}
\end{proof}

From the identity  (\ref{UtU}) it follows that we can replace $\tilde F$ by $F$ 
in  Proposition \ref{TLWP} and in (\ref{cHFT}).
The solution $\Psi(t)=(\psi(t),\dot\psi(t))\in {\cal D}$ constructed in Proposition  \ref{TLWP}
exists for $0\le t\le\tau$, where the time span $\tau$ in Lemma \ref{LLWP} depends only on $\Lambda(\Psi_0)$.
Hence, the bound (\ref{zeta_bound}) at $t=\tau$ allows us to extend the solution $\Psi$ to the time
interval $[\tau, 2\tau]$. We proceed by induction to obtain the solution for all $t\ge 0$.

\end{document}